\documentclass[a4paper,twoside,12pt]{article}
\usepackage{amssymb,amsfonts,amsmath,amsthm,latexsym}
\usepackage[pdftex,bookmarks,colorlinks=false]{hyperref}
\usepackage[hmargin=1.2in,vmargin=1.2in]{geometry}
\usepackage[auth-sc-lg,affil-sl]{authblk}
\setcounter{Maxaffil}{4}
\usepackage{graphicx}
\usepackage{array,multirow}
\usepackage[font=small]{caption}
\usepackage[mathscr]{euscript}
\usepackage{enumitem}

\pagestyle{myheadings}
\thispagestyle{empty}
\markboth {\hspace*{-9mm} \centerline{\footnotesize 
    A study on  ornated graphs}
    }
    { \centerline {\footnotesize 
   Johan Kok, Sudev Naduvath and Vivian Mukungunugwa.  
   } \hspace*{-9mm}}

\newtheorem{theorem}{Theorem}[section]
\newtheorem{corollary}[theorem]{Corollary}
\newtheorem{definition}[theorem]{Definition}
\newtheorem{lemma}[theorem]{Lemma}
\newtheorem{problem}[theorem]{Problem}
\newtheorem{proposition}[theorem]{Proposition}
\newtheorem{remark}[theorem]{Remark}
\newtheorem{conjecture}[theorem]{Conjecture}
\newtheorem{illustration}{Illustration}

\numberwithin{equation}{section}
\setlength{\parskip}{2.5pt}

\def\ni{\noindent}
\def\N{\mathbb{N}}

\def\C{\mathbb{C}}
\def\K{\mathbb{K}}
\def\cO{\mathscr{O}_n}

\title{\textbf{\sc  A Study on  Ornated Graphs}}

\author{Johan Kok}
\affil{\small Tshwane Metropolitan Police Department\\ City of Tshwane, Republic of South Africa\\ E-mail: kokkiek2@tshwane.gov.za}

\author{Sudev Naduvath}
\affil{\small Department of Mathematics\\ Vidya Academy of Science \& Technology, Thrissur, India.\\ E-mail: sudevnk@gmail.com}

\author{Vivian Mukungunugwa}
\affil{\small Department of Mathematics and Applied Mathematics,\\ University of Zimbabwe, City of Harare, Republic of Zimbabwe.\\ E-mail: vivianm@maths.uz.ac.zw}

\date{}

\begin{document}
\maketitle

\begin{abstract}
In this paper, we introduce the notion of a finite non-simple directed graph, called an ornated graph and initiate a study on ornated graphs.  An ornated graph is a directed graph on $n$ vertices, denoted by $O_n(s_l)$, whose vertices are consecutively labeled clockwise on the circumference of a circle and constructed from an ordered string $s_l$ joining them in such a way that for an odd indexed entry $a_t$ of the string, a tail $v_i$ has clockwise heads $v_j$ if and only if $(i+a_t) \ge j$ and for an even indexed entry $a_s$ of the string a tail $v_i$ has anticlockwise heads $v_j$ if and only if $(i-a_s) \le j$. The collection of the ornated graphs having this property is called the family of ornated graphs. Some interesting results are also presented in this paper on certain types of ornated graphs.
\end{abstract}

\ni \textbf{Keywords:} Degree sequence, ornated graph, Kyle graph, symmetric directed graph. 

\vspace{0.2cm}

\ni \textbf{Mathematics Subject Classification:} 05C07, 05C12, 05C20.

\section{Introduction}

For general notations and concepts in graph theory, we refer to \cite{BM}, \cite{FH} and \cite{DBW} and for digraph theory, we further refer to \cite{CL1} and \cite{JG1}. All graphs (and digraphs) mentioned in this paper are simple, connected and finite graphs, unless mentioned otherwise.

The set of all positive integers and set of all non-negative integers are denoted in this paper by $\N$ and $\N_0$ respectively. We denote a directed graph by $D$ and its underlying graph by $\underline{D}$. The \textit{order} and \textit{size} of a graph (or a digraph) is the number of vertices and the number edges (or arcs) in that graph (or digraph). The \textit{degree} of a vertex, $d(v_i)$ in a directed graph $D$, we explicitly refer to the degree found in the  underlying graph  denoted $\underline{D}$, hence $d_D(v_i) = d_{\underline{D}}(v_i)  = d^+_D(v_i) + d^-_D(v_i)$.

\section{Ornated Graphs} 

\begin{definition}\label{Def-OS}{\rm
An \textit{ordered string}, denoted by $s_l$, is defined as an $l$-tuple $(a_j)_{j=1}^{l}$ where $a_j\in \N_0\ , 1 \le j \le l$ for $j,l\in \N$. For brevity, we write $s_l = (a_j); 1 \le j \le l$. }
\end{definition}

Analogous to the corresponding terms in set theory, if the element $a_t$ is an entry in the ordered string $s_l$, then we write $a_t \in s_l$. For an entry of the ordered string say $a_t$, $t$ is called the \textit{index} of $a_t$. It should also be noted that the values $n$, $a_j$ and $l$ are independent from each other.
 
\vspace{0.2cm}
 
In view of the above concepts, we introduce the notion of ornated graphs as follows.

\begin{definition}\label{Def-OG}{\rm
Let $n$ be a positive integer and $s_l=(a_j); 1\le j \le l$  be an ordered string of non-negative integers. An \textit{ornated graph} on $n$ vertices, associated with an ordered string $s_l$, is denoted by $O_n(s_l)$ and is defined as a directed graph (digraph) with vertex set $V=V(O_n(s_l)) = \{v_i: i \in \N, i \le n \}$ and the arc set $A=A(O_n(s_l)) \subseteq V\times V $ such that for $i, j \in \N,~ (v_i,v_ j) \in A$ if and only if $(i+a_t) \ge j, i<j$, for odd indices $t$ and $(i-a_s) \le j,~ i>j$, for even indices $s$.

The collection of all ornated graphs with respect to a given ordered string $s_l$ is called the \textit{family of ornated graphs}, denoted by $\cO$. }
\end{definition} 

The following is another important concept on the ornated graphs on the same number of vertices.

\begin{definition}{\rm
When we apply the ordered strings $s_{l_1}, s_{l_2}$ consecutively to the same $n$ vertices we write $O_n(s_{l_1} + s_{l_2}) = O_n(s_{l_1}) + O_n(s_{l_2})$.} 
\end{definition}

\ni Invoking the above notation, we have the following result.

\begin{lemma}\label{Lem-OG1}
We have the following properties for ornated graphs. 
\begin{enumerate}\itemsep0mm
\item For the string $s_3=(a_1, a_2, a_3)$, we have the following properties.
\begin{enumerate}\itemsep0mm
\item[(i)] \textit{Associative law}: $O_n(a_1, a_2, a_3) = O_n(a_1, 0, 0) + O_n(0, a_2, a_3) = O_n(0, a_2, 0) + O_n (a_1, 0, a_3) = O_n(0, 0, a_3) + O_n(a_1, a_2, 0) $. 

\item[(ii)] \textit{Summation law:} $O_n(a_1, a_2, a_3) = O_n(a_1, 0, 0) +O_n(0, a_2, 0) + O_n(0, 0, a_3)$.
\end{enumerate}
\item For the string $s_4=(a_1, a_2, a_3, a_4)$, we have the following properties.
\begin{enumerate}\itemsep0mm
\item[(i)] \textit{Partial commutative law:} We have $O_n(a_1, a_2, a_3, a_4) = O_n(a_3, a_2, a_1, a_4) = O_n(a_1, a_4, a_3, a_2) = O_n(a_3, a_4, a_1, a_2)$.

\item[(ii)] \textit{Redundancy law:} If $s_m = (b_j);~ 1 \le j \le m$ is an ordered string reduced from $s_l = (a_j):~ 1 \le j \le l$ by removing all the zero entries from $s_l$ then their underlying graphs are isomorphic. 
\end{enumerate}
\end{enumerate}
\end{lemma}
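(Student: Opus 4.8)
The plan is to reduce the whole lemma to a single structural observation about how each entry of the string contributes to the arc set, after which every assertion follows by elementary multiset algebra. For an entry $a_j$ at index $j$, let $A_j$ denote the collection of arcs it induces under Definition~\ref{Def-OG}: if $j$ is odd then $A_j=\{(v_i,v_{i+k}) : 1\le k\le a_j,\ i+k\le n\}$ (the clockwise arcs of span at most $a_j$), and if $j$ is even then $A_j=\{(v_i,v_{i-k}) : 1\le k\le a_j,\ i-k\ge 1\}$ (the anticlockwise ones). Reading the ``if and only if'' index by index, and recalling that the graphs are non-simple so that a qualifying arc is counted once for each index producing it, I would record the decomposition $A(O_n(s_l))=\biguplus_{j=1}^{l}A_j$, where $\biguplus$ is multiset sum. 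Since the operation $+$ on ornated graphs acts on a common vertex set by taking the multiset sum of arc sets, and since $A_j$ depends only on the \emph{value} $a_j$ and the \emph{parity} of $j$ (with $A_j=\emptyset$ whenever $a_j=0$), this identity is essentially the only fact needed.

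For parts 1(i) and 1(ii) I would observe that each displayed decomposition of $s_3$ is a sum of strings in which every nonzero entry retains its original index, the inserted zeros being placed exactly to preserve positions $1,2,3$. Hence every nonzero entry keeps its parity and contributes the same $A_j$, and because $\biguplus$ is associative and commutative, regrouping $A_1\biguplus A_2\biguplus A_3$ in any of the indicated ways reproduces $A(O_n(a_1,a_2,a_3))$ exactly. These are therefore genuine equalities of digraphs (counting multiplicities), not merely isomorphisms. Part 2(i) is the same idea: the three equalities correspond to the transposition $a_1\leftrightarrow a_3$ (two odd indices), the transposition $a_2\leftrightarrow a_4$ (two even indices), and their product. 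In each case an odd-indexed value is moved only to another odd index and an even-indexed value only to another even index, so all parities and hence all contributions $A_j$ are preserved and only the order of summation changes; commutativity of $\biguplus$ gives equality of the arc multisets.

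Part 2(ii) is the one step where the index-parities genuinely change, since deleting a zero shifts every later entry by one position and flips its parity, so I cannot argue equality of the directed arc sets directly. The resolution is to pass to the underlying graph. The key claim I would verify is that the \emph{undirected} edge set underlying $A_j$ depends only on the value $a_j$: both the clockwise family $\{(v_i,v_{i+k})\}$ and the anticlockwise family $\{(v_i,v_{i-k})\}$ project, upon forgetting orientation, onto exactly the same edge multiset $\{\{v_p,v_q\} : 1\le |p-q|\le a_j\}$. Consequently the underlying graph of $O_n(s_l)$ is the multiset union of these direction-free edge sets over the nonzero entries and is completely determined by the multiset of nonzero values of $s_l$. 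Deleting the zero entries leaves that multiset unchanged, so the underlying graphs of $O_n(s_l)$ and $O_n(s_m)$ coincide, and in particular they are isomorphic.

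The hard part will be 2(ii), precisely because the tempting approach of treating it like the other parts fails: the parity flips caused by removing zeros destroy any direct digraph equality. One must first isolate and prove the direction-independence of the underlying edge contribution, which is the short check that clockwise and anticlockwise arcs of span $a_j$ yield the same unordered pairs $\{v_p,v_q\}$ with $|p-q|\le a_j$ (a statement that holds whether or not one admits wrap-around at the ends of the labelling). Once that lemma is in hand, the remaining parts are purely formal consequences of the commutativity and associativity of multiset sum.
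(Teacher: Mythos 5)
Your proposal is correct, and its core mechanism --- decomposing the arc set of $O_n(s_l)$ into the contributions of the individual entries (with a zero entry contributing nothing, since $(i+0)\ge j,\ i<j$ and $(i-0)\le j,\ i>j$ are both vacuous) and then regrouping --- is exactly the paper's strategy for part 1(i). The paper writes $A(O_n(a_1,a_2,a_3)) = A(O_n(a_1,0,0))\cup A(O_n(0,a_2,a_3))$ and then declares that the remaining parts ``follow in a similar way''; you go further in two respects that are genuine improvements rather than mere polish. First, you track the arc contributions as a multiset sum rather than a set union, which is the right bookkeeping for a non-simple digraph in which repeated entries of equal value and parity must produce parallel arcs (this is needed for the degree formula $\Delta = 2\sum a_j$ used later to be consistent). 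Second, and more importantly, you correctly identify that part 2(ii) does \emph{not} follow ``in a similar way'': deleting zeros shifts indices and flips parities, so the directed arc multisets genuinely differ, and one must pass to the underlying graph and check that a clockwise family of span $a_j$ and an anticlockwise family of span $a_j$ project onto the identical multiset of unordered pairs $\{v_p,v_q\}$ with $1\le |p-q|\le a_j$. That direction-forgetting observation is the actual content of the redundancy law, it is absent from the paper's proof, and your proposal supplies it; the rest of your argument is the same decomposition the paper uses, made rigorous.
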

\begin{proof}
$(i)$ From Definition \ref{Def-OG},  it follows that the ornated graph $O_n(a_1, 0, 0)$ has arcs $(v_i,v_ j) \in A(O_n(a_1, a_2, a_3))$ if and only $(v_i,v_ j) \in\{(v_i, v_j): (i+a_1) \ge j, i<j\}$.  By same definition, the ornated graph $O_n(0, a_2, a_3)$ has arcs $(v_i,v_ j) \in A(O_n(a_1, a_2, a_3))$ if and only $(v_i,v_ j) \in\{(v_i, v_j):(i+a_3) \ge j, i<j\}\cup \{(v_i, v_j):(i-a_2) \le j, i>j\}$. Hence, $A(O_n(a_1, a_2, a_3)) = A(O_n(a_1, 0, 0))\cup A(O_n(0, a_2, a_3))$. 

\vspace{0.2cm}

Therefore, it follows that $V(O_n(a_1, a_2, a_3)) = V(O_n(a_1, 0, 0)) \cup V(O_n(0, a_2, a_3)) = V(O_n(a_1, 0, 0) + O_n(0, a_2, a_3))$ and for the arc set, $A(O_n(a_1, a_2, a_3)) = A(O_n(a_1, 0, 0)) \cup A(O_n(0, a_2, a_3)) = A(O_n(a_1, 0, 0) + O_n(0, a_2, a_3))$ and hence the result follows.

\ni The other results ($(ii), (iii)$ and $(iv))$ follow in a similar way.
\end{proof}

\ni  We have the following observations as the direct and immediate consequences of Definition \ref{Def-OG} and Lemma \ref{Lem-OG1}.

\begin{remark}\label{Rem-1}{\rm 
Provided that the number of vertices and the values of even indexed and odd indexed entries respectively, remain the same and irrespective the positions of the even or odd indexed, except that they remain even or odd indexed, the number of ordered strings $(a_j);~ 1 \le j \le l$ with $l = m+t$ entries (with $m$ the number of even indexed entries and $t$ the number of odd indexed entries respectively), that construct identical ornated graphs, is given by $m!\,t!$. }
\end{remark}

\begin{remark}\label{Rem-2}{\rm 
If the number of vertices and the values of even indexed and odd indexed entries respectively, remain the same, and the positions of the entries (even or or indexed) are disregarded then the number of ordered strings that construct isomorphic underlying ornated graphs is given by $l!$} 
\end{remark}

We now introduce another notion called the reduced degree-string with respect to a given ordered string as follows.   

\begin{definition}{\rm
Consider the ordered string $s_l = (a_j);~ 1 \le j \le l$. If entry $a_t = \min\{a_j: 1 \le j \le l\}$, then after removing all entries $a_t$ the ordered string $s_{l_1} = (b_j);~ 1 \le j \le l_1$,  is called the \textit{reduced degree-string} with respect to $(a_j);~ 1 \le j \le l$. 

Recursively, the reduced degree-string $s_{l_2} = (c_j);~ 1 \le j \le l_2$ with respect to $(b_j);~ 1 \le j \le l_1$ can be found. The smallest reduced degree-string is always the string $(a_q, a_q, a_q, \ldots, a_q), a_q = \max\{(a_j); 1 \le j \le l\}$. }
\end{definition}

Now, we introduce the notion of the Kyle graph of a given family of ornated graphs as given below.

\begin{definition}{\rm 
The \textit{Kyle graph} of a given family $\cO$ of ornated graphs on $n$ vertices is the smallest ornated graph in $\cO$, attaining maximal degree associated with a given ordered string $s_l$.}
\end{definition}    

The existence of a Kyle graph for a given family of $\cO$ of ornated graph $O_n(S_l)$ is established in the following result.

\begin{lemma}\label{Lem-KG2}
Let $s_l= (a_j);~ 1 \le j \le l$ be a given ordered string. Then, the Kyle graph which attain the maximum possible degree $\Delta= \Delta(O_n(s_l))=2\sum \limits_{i=1}^{l}a_j$ at a single vertex can be constructed on $n=2k+1$, vertices where $k=a_q=\max\{(a_j); 1 \le j \le l\}$.
\end{lemma}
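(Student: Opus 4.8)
The plan is to obtain an explicit closed form for the degree $d(v_i)$ of an arbitrary vertex of $O_n(s_l)$, from which both the value $\Delta$ and the minimality of $n=2k+1$ fall out at once. First I would fix a vertex $v_i$ and tally, entry by entry, the contribution of each $a_j$ to $d^+(v_i)$ and to $d^-(v_i)$. By Definition \ref{Def-OG}, an odd-indexed entry $a_t$ produces out-arcs $(v_i,v_j)$ exactly for $i<j\le i+a_t$ and in-arcs $(v_m,v_i)$ exactly for $i-a_t\le m<i$; for an even-indexed entry $a_s$ the forward and backward roles are interchanged. Truncating at the ends of the labelling (all indices must lie in $\{1,\dots,n\}$) and recalling that the graph is non-simple, so that coincident arcs produced by distinct entries are counted with multiplicity, the number of heads contributed in the forward direction is $\min(a_j,n-i)$ and in the backward direction is $\min(a_j,i-1)$, each of these appearing once in $d^+$ and once in $d^-$ after summing over odd and even indices. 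This yields
\[
 d(v_i)=\sum_{j=1}^{l}\bigl(\min(a_j,\,n-i)+\min(a_j,\,i-1)\bigr).
\]

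From this formula the upper bound is immediate. Since $\min(a_j,\cdot)\le a_j$, every summand is bounded by $a_j$, so $d(v_i)\le 2\sum_{j=1}^{l}a_j$ for all $i$ and all $n$; this identifies $\Delta=2\sum_{j=1}^{l}a_j$ as the largest degree attainable on the string $s_l$. Equality at a vertex $v_i$ forces $\min(a_j,n-i)=a_j$ and $\min(a_j,i-1)=a_j$ simultaneously for every $j$, which is equivalent to $n-i\ge k$ and $i-1\ge k$, where $k=\max_j a_j=a_q$.

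These two inequalities rearrange to $k+1\le i\le n-k$, and such an integer $i$ exists if and only if $n\ge 2k+1$. Hence no ornated graph on fewer than $2k+1$ vertices can realize $\Delta$, while for $n=2k+1$ the unique admissible choice is $i=k+1$, for which $n-i=i-1=k\ge a_j$ for all $j$, giving $d(v_{k+1})=2\sum_{j=1}^{l}a_j=\Delta$. Therefore $n=2k+1$ is precisely the smallest order on which the maximal degree is attained at a single (central) vertex, and the corresponding $O_n(s_l)$ is the required Kyle graph.

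The hard part will be the degree bookkeeping of the first step: correctly separating the forward and backward contributions of the odd- and even-indexed entries, handling the end-of-string truncation through the two $\min$ terms, and respecting multiplicities in this non-simple setting. Once the formula for $d(v_i)$ is secured, the extremal analysis determining both $\Delta$ and the minimal $n$ is entirely routine.
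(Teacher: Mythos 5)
Your proposal is correct, and it takes a genuinely different (and in fact tighter) route than the paper. The paper argues constructively and recursively: it starts from the arcless graph on $2k+1$ vertices, links the arcs contributed by a single entry $a_r$ to see that the central vertex gains degree $2a_r$, and then adds the remaining entries one at a time to accumulate $2\sum_{j=1}^{l}a_j$ at $v_{k+1}$. You instead derive the closed-form degree formula $d(v_i)=\sum_{j=1}^{l}\bigl(\min(a_j,n-i)+\min(a_j,i-1)\bigr)$ directly from Definition \ref{Def-OG}, which is a correct tally (each entry contributes $\min(a_j,n-i)$ arcs in the clockwise direction and $\min(a_j,i-1)$ in the anticlockwise direction, landing in $d^+$ or $d^-$ according to the parity of the index but summing to the same total degree). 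What your approach buys is twofold: the upper bound $d(v_i)\le 2\sum_j a_j$ and its equality condition $k+1\le i\le n-k$ give an actual proof of \emph{minimality} of $n=2k+1$ --- no ornated graph on fewer vertices can attain $\Delta$ --- and of the \emph{uniqueness} of the extremal vertex when $n=2k+1$; the paper's proof only exhibits the construction and justifies uniqueness with the rather thin remark that $2k+1$ is odd. Your formula also subsumes Lemma \ref{Lem-OG3} and essentially the whole of Theorem \ref{Thm-OG4} as corollaries, so it is the more economical foundation for the rest of the paper.
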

\begin{proof}
Let $k = a_q = \max\{(a_j); 1 \le j \le l\}$. Then, $2k+1$ is always a unique odd number and hence $d(v_{\lceil\frac{2k+1}{2}\rceil})$ is also unique. The vertex $v_{k+1}$ can be considered the unique central vertex of the ornated graph $O_{2k+1}(s_l)$. 

Clearly, beginning with the arc-less graph on $2k+1$ vertices and without loss of generality, let $r$ be an odd index and then linking only the arcs $\{(v_i, v_j):(i+a_r) \ge j, i<j\}$  where $a_r\le k$, we have $\Delta(O_{2k+1}(0, 0, 0,\ldots, a_r, \ldots, 0)) = 2a_r$. Now, set the value of $t$ recursively as $t=a_m$,where $m=1,2,3,\ldots,(r-1), (r+1),\ldots,l$. Hence, by linking the arcs $\{(v_i, v_j):(i+t) \ge j, i<j\}$ (odd index) and $\{(v_i, v_j):(i-t) \le j, i>j\}$ (even index) recursively, it follows that $\Delta(O_{n=2k+1}(s_l)) = 2\sum \limits_{j=1}^{l}a_j$, uniquely at vertex $v_{k+1}$. This completes the proof.
\end{proof}

The Kyle graph of the family of ornated graphs corresponding to the ordered string $(1,3)$ is illustrated in Figure \ref{fig:figure}.

\begin{figure}[h!]
\centering
\includegraphics[width=0.7\linewidth]{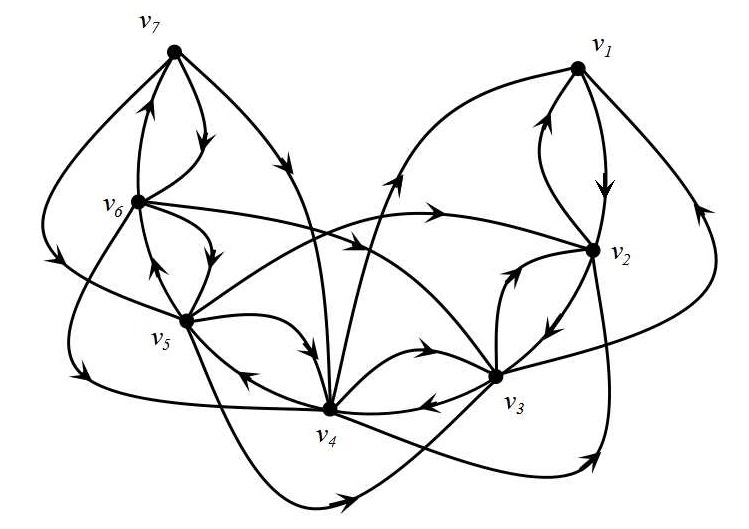}
\caption{}
\label{fig:figure}
\end{figure}

The following result determines the number of vertices with maximum degree in an ornated graph.

\begin{proposition}\label{Prop-OG3a}
For $n\in \N$, consider the family of ornated graphs with respect to the ordered string $s_l= (a_j);~ 1 \le j \le l$ and let $k=a_q=\max\{(a_j); 1 \le j \le l\}$. Then, the ornated graph $O_{(2k+1) + r}(s_l); r \in\N_0\ $, has exactly $(r+1)$ vertices with the maximum possible degree $\Delta(O_{(2k+1) + r}(s_l))=2\sum \limits_{j=1}^{l}a_j$.
\end{proposition}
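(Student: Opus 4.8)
The plan is to compute the total degree $d(v_i)$ of each vertex of $O_n(s_l)$ directly for $n=(2k+1)+r$, and then read off exactly which vertices reach the maximum value $\Delta=2\sum_{j=1}^{l}a_j$ guaranteed by Lemma~\ref{Lem-KG2}. First I would invoke the arc-set decomposition underlying Lemma~\ref{Lem-OG1} to write $d(v_i)$ as the sum of the contributions of the single-entry strings $O_n(0,\dots,a_j,\dots,0)$, so that it suffices to count, for one fixed entry $a_j$, the arcs incident with $v_i$ that it creates.

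For an odd index, the entry $a_j$ produces out-arcs $(v_i,v_{j'})$ with $i<j'\le i+a_j$ and in-arcs $(v_{i'},v_i)$ with $i-a_j\le i'<i$; clipping these ranges to $\{1,\dots,n\}$ gives a contribution of $\min(a_j,n-i)+\min(a_j,i-1)$ to $d(v_i)$. For an even index the two ranges swap roles (out-arcs run to lower indices, in-arcs come from higher indices), but the contribution is again $\min(a_j,n-i)+\min(a_j,i-1)$. The key elementary observation is then that each such contribution is at most $2a_j$, with equality precisely when both $i-1\ge a_j$ and $n-i\ge a_j$, that is, when $a_j+1\le i\le n-a_j$.

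Summing over $j$ yields $d(v_i)\le 2\sum_{j=1}^{l}a_j=\Delta$ for every $i$, and equality holds if and only if the window condition $a_j+1\le i\le n-a_j$ holds simultaneously for all $j$. Since $k=\max_j a_j$ is the binding constraint, this reduces to $k+1\le i\le n-k$. Substituting $n=(2k+1)+r$ turns the condition into $k+1\le i\le k+1+r$, an interval containing exactly the $r+1$ vertices $v_{k+1},v_{k+2},\dots,v_{k+1+r}$; all of them attain $\Delta$ and no other vertex does.

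I expect the main obstacle to be the boundary bookkeeping: getting the truncated ($\min$) counts right near $v_1$ and $v_n$ for both parities, and checking---in the non-simple setting, where the out-arcs and in-arcs produced by different entries are genuinely distinct and therefore add without cancellation---that the per-entry contributions really do sum to $2\sum_{j=1}^{l}a_j$ exactly on the stated window and strictly less outside it. Once the per-entry count $\min(a_j,n-i)+\min(a_j,i-1)$ is pinned down, the remainder is the clean interval computation above, with Lemma~\ref{Lem-KG2} supplying both the value of $\Delta$ and the fact that it is attained (the case $r=0$).
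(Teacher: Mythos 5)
Your proposal is correct, and it reaches the conclusion by a genuinely different and more explicit route than the paper. The paper's proof starts from Lemma \ref{Lem-KG2}, observes that in $O_{(2k+1)+1}(s_l)$ both $v_{k+1}$ and $v_{k+2}$ act as ``central vertices'' (each having at least $k$ vertices on either side), and then appeals to recursion on $r$; it never explicitly rules out that some further vertex might also attain $\Delta$, so the ``exactly $r+1$'' half is left implicit. You instead derive the closed-form per-entry contribution $\min(a_j,n-i)+\min(a_j,i-1)$ to $d(v_i)$ (which is correct for both parities of the index, as you note, and which matches the paper's additive degree convention for its non-simple arcs — compare the table in the illustration after Theorem \ref{Thm-OG4}), sum over $j$ to get $d(v_i)=\sum_{j=1}^{l}\bigl(\min(a_j,n-i)+\min(a_j,i-1)\bigr)$, and characterize equality with $2\sum_{j=1}^{l}a_j$ by the window $k+1\le i\le n-k$, i.e.\ exactly the $r+1$ vertices $v_{k+1},\dots,v_{k+1+r}$. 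This buys you both directions of ``exactly'' in one stroke, dispenses with the induction on $r$, and as a by-product re-proves Lemma \ref{Lem-KG2} and essentially Lemma \ref{Lem-OG3} (the minimum degree at $v_1,v_n$) from the same formula; the cost is only the boundary bookkeeping you already flagged, which checks out. No gap.
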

\begin{proof}
Let $s_l= (a_j);~ 1 \le j \le l$ be the given ordered string and let $k=a_q=\max\{(a_j); 1 \le j \le l\}$. By Lemma \ref{Lem-KG2}, the vertex $v_{k+1}$ has degree $d(v_{k+1}) = \Delta(O_{(2k+1)}(s_l)) = 2\sum \limits_{j=1}^{l}a_j$.
 
Now, consider the ornated graph $O_{(2k+1)+1}(s_l)$. It can be noted that the vertex $v_{k+1}$ serves as the central vertex with respect to vertices $v_1, v_2, v_3, \ldots, v_k$ in applying Definition \ref{Def-OG} for all odd indexed vertices. It also serves as the central vertex for vertices $v_{k+2}, v_{k+3}, \ldots, v_{2k}$ in applying Definition \ref{Def-OG} to for all even indexed vertices. Hence,  $ d(v_{k+1}) = \Delta(O_{(2k+1) + 1}(s_l)) = 2\sum \limits_{i=1}^{l}a_i$. 

Similarly, $v_{k+2}$ is the central vertex with respect to vertices $v_{k+3}, v_{k+4}, \ldots, v_{2k+1}$ in applying Definition \ref{Def-OG} for all even indexed vertices and also serves as central vertex for vertices $v_2, v_3, \ldots, v_{k+1}$ in applying Definition \ref{Def-OG} for all odd indexed vertices. Hence, $d(v_{k+2}) = \Delta(O_{(2k+1) + 1}(s_l)) = 2\sum \limits_{j=1}^{l}a_j$. 

Hence, $d(v_{k+1}) = d(v_{k+2}) = \Delta(O_{(2k+1) + 1}(s_l)) = 2\sum \limits_{j=1}^{l}a_j$. The result follows recursively, for $O_{(2k+1) + r}(s_l), r \in\N_0\ $.
\end{proof}

\ni Invoking the above results, we define the following notion.

\begin{definition}{\rm
The set of vertices in the ornated graph $O_{(2k+1) + r}(s_l), r \in\N_0\ $, with degree  $\Delta(O_n(s_l))$ is called the \textit{central cluster} of the ornated graph and is denoted by $\C(O_n(s_l))$. }
\end{definition}

The properties of the vertices having the maximal possible degree in an ornated graph have already been discussed. An interesting question that arises in this context is about the vertices of these ornated graphs having minimum degree. As a result, we have the following.

\begin{lemma}\label{Lem-OG3}
For all ornated graphs from a family of finite ornated graphs $\cO$ which are larger or equal in order to the Kyle graph of that family, we have the vertices $v_1$ and $v_n$ only such that $d(v_1)=d(v_n)=\delta(O_n(s_l))=\sum \limits_{j=1}^{l}a_j$.
\end{lemma}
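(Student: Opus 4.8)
The plan is to reduce the whole statement to a single closed-form expression for the degree of an arbitrary vertex and then analyse that expression. First I would show that, irrespective of the parity of its index, an entry $a_j$ of the string $s_l$ contributes exactly $\min(a_j,\,p-1)+\min(a_j,\,n-p)$ arcs to the degree of the vertex $v_p$. Indeed, an odd-indexed entry produces the forward out-arcs $v_p\to v_j$ with $p<j\le p+a_j$ and the backward in-arcs $v_i\to v_p$ with $p-a_j\le i<p$, giving $\min(a_j,n-p)$ and $\min(a_j,p-1)$ arcs respectively; an even-indexed entry merely reverses the roles of in- and out-arcs and hence contributes the same count. Summing over the string yields
\[
d(v_p)=\sum_{j=1}^{l}\bigl(\min(a_j,p-1)+\min(a_j,n-p)\bigr),
\]
a formula valid for every $1\le p\le n$, which already reproduces the value $\Delta=2\sum_{j} a_j$ of Lemma \ref{Lem-KG2} at the central vertex $v_{k+1}$ of the Kyle graph.

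Next I would read off the two boundary values and the lower bound. Since the order hypothesis $n\ge 2k+1$ forces $a_j\le k\le n-1$ for every $j$, at $p=1$ each summand becomes $\min(a_j,0)+\min(a_j,n-1)=a_j$, and symmetrically at $p=n$, so $d(v_1)=d(v_n)=\sum_{j} a_j$. To see that no vertex does better, I would invoke the elementary inequality $\min(a,x)+\min(a,y)\ge\min(a,x+y)$ for $x,y\ge 0$; applied with $x=p-1$ and $y=n-p$ (so $x+y=n-1$) it gives $\min(a_j,p-1)+\min(a_j,n-p)\ge\min(a_j,n-1)=a_j$ termwise, whence $d(v_p)\ge\sum_{j} a_j$ for all $p$ and thus $\delta(O_n(s_l))=\sum_{j} a_j$.

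Finally, the heart of the argument --- and the step most in need of care --- is the strict inequality at every interior vertex, which is what pins the minimum down to $v_1$ and $v_n$ alone. Assuming the nondegenerate case $k=a_q\ge 1$, I would single out one entry $a_q=\max\{a_j\}$ and examine $g_q(p)=\min(k,p-1)+\min(k,n-p)$ for $2\le p\le n-1$. Splitting according to whether $p-1$ and $n-p$ exceed $k$, one checks that in each case $g_q(p)\ge k+1>k$: if neither is truncated then $g_q(p)=2k$, while if one of $p-1,\,n-p$ is truncated to $k$ the other is at least $1$; the only way both could be strictly smaller than $k$ would require $n-k<p\le k$, an empty index range precisely because $n\ge 2k+1$. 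Combining this strict gain $g_q(p)>a_q$ with the termwise bound $g_j(p)\ge a_j$ for the remaining entries gives $d(v_p)>\sum_{j} a_j$ for $2\le p\le n-1$, so the minimum is attained only at $v_1$ and $v_n$. The main obstacle is organising this last case analysis and invoking the hypothesis $n\ge 2k+1$ at exactly the right point to discard the degenerate case; everything else is bookkeeping on the unified degree formula.
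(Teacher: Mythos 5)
Your proposal is correct, and it is organised quite differently from the paper's argument. The paper proceeds incrementally: it asserts that $d(v_1)=d(v_n)=\sum_j a_j$ ``follows similarly'' to the construction in Lemma \ref{Lem-KG2}, and then rules out any other minimum-degree vertex by a contradiction argument, adding the arcs of one entry at a time and claiming the chain $d(v_1)<d(v_2)<\cdots<d(v_m)=\Delta$ up to the first central-cluster vertex. You instead derive the closed-form degree formula $d(v_p)=\sum_{j}\bigl(\min(a_j,p-1)+\min(a_j,n-p)\bigr)$, get the boundary values and the global lower bound $d(v_p)\ge\sum_j a_j$ from the termwise inequality $\min(a,x)+\min(a,y)\ge\min(a,x+y)$, and obtain strictness at interior vertices by isolating the maximal entry $a_q=k$ and using $n\ge 2k+1$ to kill the one bad case. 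What your route buys is rigour and completeness: it actually proves the value of $\delta$ rather than deferring to an earlier proof, it replaces the paper's slightly shaky passage from $d(v_2)\le d(v_3)\le\cdots$ to a strict chain with a clean termwise argument, and it makes explicit where the hypothesis on the order ($n\ge 2k+1$) is used. You also correctly flag the degenerate all-zero string, for which the stated uniqueness of the minimum fails and which the paper silently ignores; the only cosmetic caveat is that you should state that assumption ($k\ge1$) as part of the hypotheses rather than mid-proof. The paper's version, for what it is worth, gives slightly more structural intuition about how the degree sequence grows monotonically toward the central cluster, which is reused in Theorem \ref{Thm-OG4}.
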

\begin{proof}
Let $s_l=(a_j);1 \le j \le l$ be an ordered string. The proof for the statement $d(v_1)=d(v_n)= \delta(O_n(s_l)) = \sum \limits_{j=1}^{l}a_j$, follows similarly to the proof of Lemma \ref{Lem-KG2}. 

Assume that there exists another vertex $v_i$ with $d(v_i) = \delta(O_n(s_l))$. Also assume, without loss of generality, that $1< i < m$, where $m$ is the smallest index with $v_m \in \C$. Beginning with the arcless graph and linking the arcs of an entry say, $a_t$ it follows that $d(v_2) = d(v_1) + 1$ in the subgraph of the ornated graph. It also follows that $d(v_2) \le d(v_3) \le d(v_4) \le \ldots \le d(v_i) \ldots \le d(v_m)$. The latter holds true as the linkages for all entries are added resulting in $d(v_1) < d(v_2) < d(v_3) < d(v_4) < \ldots < d(v_i) \ldots < d(v_m) = \Delta(O_n(s_l))$. This is a contradiction to the assumption that $d(v_i) = \delta(O_n(s_l))$.
\end{proof}

The following result is an immediate consequence of Proposition \ref{Prop-OG3a} and Lemma \ref{Lem-OG3}.

\begin{corollary}
For any ornated graph $O_n(s_l)$ in a family $\cO$ of finite ornated graphs with order greater than  or equal to that of a Kyle graph of that family,  $\Delta(O_n(s_l))=2\delta(O_n(s_l))$.
\end{corollary}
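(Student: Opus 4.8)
The plan is to combine the two preceding results directly, since both the maximum and the minimum degree of a sufficiently large ornated graph have already been computed explicitly in terms of the string entries. First I would invoke Proposition \ref{Prop-OG3a}, which establishes that for every ornated graph $O_{(2k+1)+r}(s_l)$ with $r\in\N_0$ (equivalently, for every ornated graph whose order is at least that of the Kyle graph), the maximum degree equals $\Delta(O_n(s_l))=2\sum\limits_{j=1}^{l}a_j$. Next I would invoke Lemma \ref{Lem-OG3}, which asserts that under the same order hypothesis the minimum degree is attained only at $v_1$ and $v_n$ and equals $\delta(O_n(s_l))=\sum\limits_{j=1}^{l}a_j$.

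With these two closed-form expressions in hand, the conclusion follows by a single algebraic substitution: since $\Delta(O_n(s_l))=2\sum\limits_{j=1}^{l}a_j=2\left(\sum\limits_{j=1}^{l}a_j\right)=2\,\delta(O_n(s_l))$, the claimed identity holds. The only point demanding any attention is to confirm that the order hypothesis of the corollary (order at least that of the Kyle graph of the family) simultaneously satisfies the hypotheses of both cited results. This is immediate, because Proposition \ref{Prop-OG3a} is stated for all $O_{(2k+1)+r}(s_l)$ with $r\in\N_0$ and Lemma \ref{Lem-OG3} is stated for all ornated graphs of order at least that of the Kyle graph, and by Lemma \ref{Lem-KG2} the Kyle graph has order $2k+1$ with $k=\max\{a_j:1\le j\le l\}$; the two order descriptions therefore coincide with the hypothesis here.

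I do not anticipate any genuine obstacle, as the corollary is a purely formal consequence of the two explicit degree formulas, and no further structural analysis of the arc set is required. The one subtlety worth recording is that the same ordered string $s_l$ and the same value $k=\max\{a_j\}$ must be used throughout the argument, so that the quantities $\sum\limits_{j=1}^{l}a_j$ appearing in the two invoked statements genuinely refer to the same sum; once this is noted, the relation $\Delta(O_n(s_l))=2\,\delta(O_n(s_l))$ is obtained at once.
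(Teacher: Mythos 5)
Your proposal is correct and follows essentially the same route as the paper's own proof: both simply substitute the explicit formulas $\delta(O_n(s_l))=\sum_{j=1}^{l}a_j$ from Lemma \ref{Lem-OG3} and $\Delta(O_n(s_l))=2\sum_{j=1}^{l}a_j$ from Proposition \ref{Prop-OG3a} and read off the factor of two. Your additional check that the two order hypotheses coincide is a harmless (and slightly more careful) elaboration of what the paper leaves implicit.
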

\begin{proof}
Since $\delta(O_n(s_l)) = \sum \limits_{j=1}^{l}a_j$ and $\Delta(O_{n=(2k+1) +r}(s_l)) = 2\sum \limits_{j=1}^{l}a_j$, for an ornated graph on at least $2k+1$ vertices,where $k = max\{(a_j); 1 \le j \le l\}$, we have $\Delta( O_n(s_l)) = 2\delta(O_n(s_l))$.
\end{proof}

The degree sequence in the Kyle graph of the family of ornated graphs $\cO$ is recursively determined in the following theorem. 

\begin{theorem}\label{Thm-OG4}
Let $s_l=(a_j); 1 \le j \le l$ be an ordered string. For the Kyle Graph, $O_{2k + 1}(s_l), k = a_q = max\{(a_j); 1 \le j \le l\}$ and $a_j \ge 1~ \forall j$ the degree sequence is given by 
\begin{eqnarray*}
d(v_i) =
\begin{cases}
\sum \limits_{j=1}^{l}a_j + (i-1)l, & 1\le i\le h_i+1; h_1= \min\{(a_j);1 \le j \le l\}\\
\sum \limits_{j=1}^{l}a_j + h_1l) + (i-1)l_1; & h_1+2\le i \le h_2-h_1; h_2= 2nd \min \{(a_j);1 \le j \le l\}\\
\vdots\\
(recursively)\\
\vdots\\
2\sum \limits_{j=1}^{l}a_j -((k - k_1) - 1); & i=k_1+2; k_1= 2nd \max \{(a_j);1 \le j \le l\}\\
2\sum \limits_{i=1}^{l}a_i -1; & i=k\\
2\sum\limits_{j=1}^{l}a_j; & i=k+1\\
\end{cases}
\end{eqnarray*}
and $v_{k+1}$ being the central vertex, the degrees of the remaining vertices are recursively given by $d(v_{(2k+1)-i}) = \sum \limits_{j=1}^{l}a_j + il; 0\le i\le h_1-1 ~\text{and}~ h_1=\min\{(a_j);1 \le j \le l\}$.
\end{theorem}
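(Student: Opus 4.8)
The plan is to reduce the whole statement to a single counting formula for the degree of each vertex, and then to read off the degree sequence from the behaviour of that formula as a function of the vertex index.

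First I would compute, for an arbitrary ornated graph $O_n(s_l)$, the underlying degree of $v_i$ directly from Definition \ref{Def-OG}. Fix an entry $a_j$. If $j$ is odd it contributes out-arcs $(v_i,v_{i'})$ with $i<i'\le i+a_j$ and in-arcs $(v_{i''},v_i)$ with $i-a_j\le i''<i$; if $j$ is even it contributes the mirror-image arcs. Either way, after truncating at the ends $v_1,v_n$, the number of underlying edges at $v_i$ coming from the single entry $a_j$ is $\min(a_j,i-1)+\min(a_j,n-i)$, independent of the parity of $j$. Since $d=d^++d^-$ counts every (possibly repeated) arc incident to $v_i$, summing over all entries gives the master formula
\[
d(v_i)=\sum_{j=1}^{l}\bigl[\min(a_j,i-1)+\min(a_j,n-i)\bigr].
\]
I would immediately record the symmetry $d(v_i)=d(v_{n+1-i})$, since swapping $i\leftrightarrow n+1-i$ merely interchanges the two arguments $i-1$ and $n-i$.

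Next I would specialise to the Kyle graph, $n=2k+1$ with $k=\max_j a_j$. For any $i$ in the lower half $1\le i\le k+1$ we have $n-i=2k+1-i\ge k\ge a_j$, so every term $\min(a_j,n-i)$ collapses to $a_j$ and the right-hand sum equals $S:=\sum_j a_j$. Hence on the lower half
\[
d(v_i)=S+g(i-1),\qquad g(m):=\sum_{j=1}^{l}\min(a_j,m).
\]
The theorem is then entirely a statement about $g$. I would observe that $g$ is piecewise linear, non-decreasing and concave in $m$, with slope at $m$ equal to the number of entries strictly exceeding $m$; its breakpoints therefore occur exactly at the distinct string values $h_1=\min_j a_j<h_2<\cdots$. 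On the first segment $0\le m\le h_1$ all $l$ entries exceed $m$, so $g(m)=lm$ and $d(v_i)=S+(i-1)l$ for $1\le i\le h_1+1$, which is the first listed case; crossing each subsequent distinct value $h_r$ drops the slope by the multiplicity of $h_r$, producing the successive linear pieces in the recursion. At the top, $g(k)=S$ gives $d(v_{k+1})=2S=\Delta$, and the slope on the final segment (the number of entries equal to $k$) determines $d(v_k)$; when the maximum $k$ is attained by a single entry this slope is $1$, giving $d(v_k)=2S-1$ as stated. The upper half $i>k+1$ then follows either from the symmetry above or by the same direct computation: writing the vertices as $v_{(2k+1)-i}$ and noting that here the left argument saturates, since $(2k+1)-i-1=2k-i\ge k\ge a_j$ for $0\le i\le k$, one gets $d(v_{(2k+1)-i})=S+\sum_j\min(a_j,i)=S+i\,l$ on the range $0\le i\le h_1-1$, matching the closing claim.

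The main obstacle I anticipate is bookkeeping rather than conceptual. Converting the clean description of $g$ (a concave piecewise-linear function whose slopes count how many entries exceed the current threshold) into the exact index ranges and constants in the displayed case-split is delicate, especially with repeated entries: ties among the $a_j$ change the slopes by their multiplicities and shift the breakpoints, so crisp constants such as $2S-1$ hold only under the generic assumption that the relevant value is attained once. I would therefore prefer to state the degree sequence intrinsically through $g$ and then verify that, under the running hypotheses $a_j\ge 1$ for all $j$ and a unique maximum, it specialises to the listed recursion, checking the endpoint vertices $v_1,v_n$ (where one minimum vanishes) and the central vertex $v_{k+1}$ separately to anchor the recursion.
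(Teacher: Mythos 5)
Your proposal is correct, and it takes a genuinely different route from the paper. The paper proves the theorem by recursion on reduced strings: it first treats the single-entry string $(b_1)$, then the two-entry strings $(b_1,b_2)$ in the cases $b_1=b_2$ and $b_1>b_2$, and then asserts that the pattern continues ``recursively'' for longer strings; the aggregation over entries is done via the summation law of Lemma \ref{Lem-OG1}. You instead collapse the entire argument into the closed-form master formula $d(v_i)=\sum_{j=1}^{l}\bigl[\min(a_j,i-1)+\min(a_j,n-i)\bigr]$ (which is correct: each entry contributes $\min(a_j,n-i)$ arcs in one rotational direction and $\min(a_j,i-1)$ in the other, regardless of the parity of its index), specialize to $n=2k+1$ so that one of the two minima saturates on each half, and read the whole degree sequence off the concave piecewise-linear function $g(m)=\sum_j\min(a_j,m)$ whose slope at $m$ counts the entries exceeding $m$. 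What your approach buys is substantial: it gives the symmetry $d(v_i)=d(v_{n+1-i})$ for free, it replaces the paper's open-ended ``recursively'' with a single verifiable identity, and it correctly isolates the hidden hypotheses --- in particular that $d(v_k)=2\sum_j a_j-1$ requires the maximum to be attained by a unique entry, and that the breakpoints and slopes depend on the multiplicities of the distinct values $h_1<h_2<\cdots$, points the paper's case-split glosses over (indeed the displayed second case of the theorem is only consistent with your $g$-based computation after reindexing $(i-1)$ to $(i-h_1-1)$). The one thing you should still do to make this a complete proof rather than a plan is carry out the bookkeeping you flag at the end: translate the breakpoint/slope description of $g$ into the exact index ranges of the stated case-split under the running hypotheses, which is routine given your formula.
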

\begin{proof}
We prove the result for vertices $v_1, v_2, v_3, \ldots v_{k+1}$ since the mirror image of the degree sequence then follows from Definition \ref{Def-OG}. 

\vspace{0.2cm}

Consider the ordered string $s_l = (a_j); 1 \le j \le l$. Since the degree sequence of the Kyle Graph, $O_{2k + 1}(s_ l), k = a_q = \max\{(a_j); 1 \le j \le l\}$ is identical to the degree sequence of the underlying Kyle Graph we use Remark \ref{Rem-2} and consider the ordered string $(b_j);1 \le j \le l$ such that $b_j \ge b_\ell$ if and only if $j > l$ and $b_i = a_j$ for at least one value of $j$. Hence, $d(v_1) = \sum \limits_{i=1}^{l}b_i$ follows from Lemma \ref{Lem-OG3}. 

Consider the reduced string $s_1 = (b_1)$. Invoking Definition \ref{Def-OG}, we note that for the underlying Kyle Graph, $\underline O_{(2b_1+1)}(b_1)$ we have that $d(v_1) = b_1 = \sum \limits_{i=1}^{1}b_i$. Since $h_1 = b_1$ and $l=1$ it follows from Definition \ref{Def-OG} that $d(v_2) = b_1 + 1 = \sum \limits_{i=1}^{1}b_i + l , d(v_3) = d(v_2) + 1 = b_1 +2.1 = \sum \limits_{i=1}^{1}b_i +2l, \ldots, d(v_{b_1 + 1}) = d(v_{b_1}) + 1 = b_1 + b_1.1 = \sum \limits_{i=1}^{1}b_i +h_1l$. Hence, the result holds for $\underline O_{(2b_1+1)}(b_1)$.

\vspace{0.2cm}

\ni Now we consider the following cases.

\vspace{0.2cm}

\ni {\em Case 1:} Consider the reduced string $s_2 = (b_1, b_2), b_1 = b_2$. It follows that for the underlying Kyle Graph, $\underline O_{(2b_1+1)}(b_1, b_2)$ we have that $d(v_1) = 2b_1 = 2(\sum \limits_{i=1}^{1}b_i) = \sum \limits_{i=1}^{2}b_i$. It also follows that since $h_1 = b_2$ and $l=2$ for this case, we have $d(v_2) = 2(b_1 + 1) = 2(\sum \limits_{i=1}^{1}b_i + 1) = 2(b_2 +1) = \sum \limits_{i=1}^{2}b_i + 2 = \sum \limits_{i=1}^{2}b_i + l, d(v_3) = 2(d(v_2) + 1) = 2(b_1 +2) = 2(\sum \limits_{i=1}^{1}b_i +2) = 2(b_2 + 2) = \sum \limits_{i=1}^{2}b_i + 4 = \sum \limits_{i=1}^{2}b_i +2l, \ldots, d(v_{b_1 + 1}) = 2(d(v_{b_1}) + 1) = 2(b_1 + b_1) = 2(\sum \limits_{i=1}^{1}b_i +b_1) = 2(b_2 + b_2) = 2\sum \limits_{i=1}^{1}b_i + 2b_2 = \sum \limits_{i=1}^{2}b_i + h_1l$. So the result holds for $\underline O_{(2b_1+1)}(b_1, b_2), b_1 = b_2$.

\vspace{0.2cm}

\ni {\em Case 2:} Consider the reduced string $s_2 = (b_1, b_2), b_1 > b_2$. Let $b_1-b_2=t$. Definition \ref{Def-OG} allows the underlying graph of $O_{2b_1 +1}(0, b_2)$ to have degree sequence $d(v_1) = b_2$, $d(v_2) = b_2 +1$, $d(v_3) = b_2 +2$, and proceeding like this, $d(v_{(2b_1 +1) - t}) = b_2 + b_2 =2b_2$. Thereafter, for $j=1,2,3, \ldots,t$, we have $d(v_{(2b_1 +1) - (t-j)}) = 2b_2$. Hence, it follows that the underlying graph of $O_{2b_1 +1}(b_1, b_2)$ has degree sequence, $d(v_1)=\sum \limits_{i=1}^{2}b_i, d(v_2) = \sum \limits_{i=1}^{2}b_i + 2, d(v_3) = \sum \limits_{i=1}^{2}b_i + 4,\ldots\ldots, d(v_{(2b_1 +1) - t}) = \sum \limits_{i=1}^{2}b_i + 2b_2$. Thereafter for $j = 1, 2, 3, \ldots, t$, we have $d(v_{(2b_1 +1) - (t-j)})=\sum \limits_{i=1}^{2}b_i + j$. Hence, $h_1 = b_2$ and $l=2$ for vertices $i = 1,2,3, \ldots, (2b_1-t)$ and for the subsequent $t$ vertices, $l_1=1$ and for first $h_1+1$ vertices, we have $d(v_1) = \sum\limits_{i=1}^{l}a_i$, $d(v_2)=\sum \limits_{i=1}^{l}a_i + l$, $d(v_3)=\sum \limits_{i=1}^{l}a_i + 2l$ and so on up to $d(v_{h_1 +1})=\sum \limits_{i=1}^{l}a_i + h_1l$ where $h_1= \min\{(a_j); 1 \le j \le l\}$, for the next $h_2-h_1$ vertices, we have $d(v_{(h_1+2)})=\sum \limits_{i=1}^{l}a_i + h_1l + l_1$,  $d(v_{(h_1+3)})=\sum \limits_{i=1}^{l}a_i + h_1l+2l_1$, and so on up to $d(v_{(h_2- h_1)})= \sum \limits_{i=1}^{l}a_i + h_1l + (h_2 -h_1)l_1$, where $h_2= 2^{nd} \min\{(a_j);1 \le j \le l\}$, and recursively proceeding like this, for sequential reduced degree-strings until we have $d(v_{k_1+2})= 2\sum \limits_{i=1}^{l}a_i-((k - k_1) - 1), \ldots\ldots, d(v_k)=  2\sum \limits_{i=1}^{l}a_i-1, d(v_{k+1})= 2\sum \limits_{i=1}^{l}a_i$ for $(k - k_1)$ vertices, where $k_1 = 2nd \max\{(a_j); 1 \le j \le l\}$, with $v_{k+1}$ being central vertex. Then, recursively \textit{mirror image} degree values follow.  

Hence, the result follows recursively for the underlying graphs of $O_{2b_1 +1}(b_1, b_2, b_3)$,  $O_{2b_1 +1}(b_1, b_2, b_3, b_4)$, so on up to $O_{2b_1+1}((b_j);1 \le j \le l)$.
\end{proof}

\begin{illustration}{\rm 
For the Kyle Graph $O_{17}(1, 3, 5, 1, 2, 8)$ the degree sequence is given by $d(v_1) = 20, d(v_2) = 26, d(v_3) = 30, d(v_4) = 33, d(v_5) = 35, d(v_6) = 37, d(v_7)= 38, d(v_8) = 39, d(v_9) = 40, d(v_{10}) = 39, d(v_{11}) = 38, d(v_{12}) = 37, d(v_{13}) = 35, d(v_{14}) = 33, d(v_{15}) = 30, d(v_{16}) = 26$, and $d(v_{17}) = 20$. Let the notation $d(v_i) \rightsquigarrow a_j$ for illustrative purposes, denote $d(v_i)$ in $O_n(0,0,0, ..a_j, ..,0)$. The degree sequence can easily be calculated by using the summation law (Lemma \ref{Lem-OG1} (ii)). See the table below:

\begin{table}[h]
\begin{center}
\begin{tabular}{|m{0.5cm}|m{1.25cm}|m{1.25cm}|m{1.25cm}|m{1.25cm}|m{1.25cm}|m{1.25cm}|m{1cm}|}
\hline
$v_i$ & $d(v_i)\rightsquigarrow a_1$ & $d(v_i) \rightsquigarrow a_2$ & $d(v_i) \rightsquigarrow a_3$ & $d(v_i) \rightsquigarrow a_4$ & $d(v_i) \rightsquigarrow a_5$ & $d(v_i) \rightsquigarrow a_6$ & $d(v_i)$\\
\hline
$v_1$ & 1 & 3 & 5 & 1 & 2 & 8 & 20\\
\hline
$v_2$ & 2 & 4 & 6 & 2 & 3 & 9 & 26\\
\hline
$v_3$ & 2 & 5 & 7 & 2 & 4 & 10 & 30\\
\hline
$v_4$ & 2 & 6 & 8 & 2 & 4 & 11 & 33\\
\hline
$v_5$ & 2 & 6 & 9 & 2 & 4 & 12 & 35\\
\hline
$v_6$ & 2 & 6 & 10 & 2 & 4 & 13 & 37\\
\hline
$v_7$ & 2 & 6 & 10 & 2 & 4 & 14 & 38\\
\hline
$v_8$ & 2 & 6 & 10 & 2 & 4 & 15 & 39\\
\hline
$v_9$ & 2 & 6 & 10 & 2 & 4 & 16 & 40\\
\hline
$v_{10}$ & 2 & 6 & 10 & 2 & 4 & 15 & 39\\
\hline
$v_{11}$ & 2 & 6 & 10 & 2 & 4 & 14 & 38\\
\hline
$v_{12}$ & 2 & 6 & 10 & 2 & 4 & 13 & 37\\
\hline
$v_{13}$ & 2 & 6 & 9 & 2 & 4 & 12 & 35\\
\hline
$v_{14}$ & 2 & 6 & 8 & 2 & 4 & 11 & 33\\
\hline
$v_{15}$ & 2 & 5 & 7 & 2 & 4 & 10 & 30\\
\hline
$v_{16}$ & 2 & 4 & 6 & 2 & 3 & 9 & 26\\
\hline
$v_{17}$ & 1 & 3 & 5 & 1 & 2 & 8 & 20\\
\hline
\end{tabular}
\end{center}
\end{table}
}
\end{illustration}

Note the ``mirror image" degree sequence with $v_9$ being central to the degree sequence.

\begin{remark}{\rm 
Since the number of edges in a graph $G$ half of the sum of degrees of all vertices of $G$, we can find the number of edges in the Kyle graph of a family of ornated graphs $\cO$  by adding all the terms explained in Theorem \ref{Thm-OG4}. }
\end{remark}

\section{Determining the Ornated Graphs}

\begin{definition}{\rm
Consider vertices $v_i$ and $v_{i+1}$ of an ornated graph.  The \textit{relative indegree and outdegree} of a vertex $v_i$, denoted by $d_r^-(v_i)$ and $d_r^+(v_i)$ respectively, are defined to be the number of $(v_{i+1}, v_i)$ and $(v_i, v_{i+1})$ arcs, respectively.}
\end{definition}

The following result is on the number odd and even indexed entries in an ordered string related to the indegree and outdegree of a vertex in the corresponding ornated graph. 

\begin{proposition}\label{P-OGIOD}
For a vertex $v_i, i< n$ in an ornated graph, $O_n((a_j); 1 \le j \le l)$ which is larger or equal to the Kyle Graph, if $d_r^- (v_i)= m$ and $d_r^+(v_i) = t$, then the ordered string has $m+t$ entries composed of $m$ even indexed entries and $t$ odd indexed entries.
\end{proposition}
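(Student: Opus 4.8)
The plan is to set up an explicit parity-respecting bijection between the entries of the ordered string $s_l=(a_j);\,1\le j\le l$ and the parallel arcs joining the consecutive vertices $v_i$ and $v_{i+1}$. Because $i<n$, the vertex $v_{i+1}$ exists, so exactly two ``consecutive'' arc types are available between the pair: the clockwise arcs $(v_i,v_{i+1})$, counted by $d_r^+(v_i)$, and the anticlockwise arcs $(v_{i+1},v_i)$, counted by $d_r^-(v_i)$. I would show that each entry of $s_l$ fills exactly one of these two slots, with the slot determined solely by the parity of the entry's index.

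First I would treat the odd indexed entries. Fix an odd index $t$. By Definition~\ref{Def-OG}, taking $v_i$ as tail, the vertex $v_{i+1}$ is a clockwise head precisely when $(i+a_t)\ge i+1$, i.e.\ when $a_t\ge 1$. Thus every (positive) odd indexed entry yields exactly one arc $(v_i,v_{i+1})$, and no even indexed entry can: even indexed entries generate only anticlockwise arcs, which run from a larger to a smaller index and hence never realise $(v_i,v_{i+1})$. Counting the clockwise slot therefore gives $d_r^+(v_i)$ equal to the number of odd indexed entries, so $d_r^+(v_i)=t$ forces $s_l$ to contain exactly $t$ odd indexed entries.

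The even indexed entries are handled symmetrically, now reading $v_{i+1}$ as tail. For an even index $s$, Definition~\ref{Def-OG} makes $v_i$ an anticlockwise head of $v_{i+1}$ exactly when $(i+1)-a_s\le i$, i.e.\ when $a_s\ge 1$; so each (positive) even indexed entry contributes exactly one arc $(v_{i+1},v_i)$, while odd indexed entries, producing only clockwise arcs, contribute none. Hence $d_r^-(v_i)$ equals the number of even indexed entries, and $d_r^-(v_i)=m$ forces exactly $m$ even indexed entries. Since the odd and even indices partition $\{1,\dots,l\}$, adding the two disjoint counts gives $l=m+t$, which is the assertion.

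The core correspondence is routine; the main point requiring care is the degenerate bookkeeping. One must note that the case hypothesis $i<n$ is exactly what keeps the clockwise slot from being truncated, and that the clean identities ``$d_r^+(v_i)=\#\{\text{odd indices}\}$'' and ``$d_r^-(v_i)=\#\{\text{even indices}\}$'' presume every $a_j\ge 1$, since an entry equal to $0$ fires neither slot and would make $l$ strictly exceed $m+t$. Invoking that $O_n(s_l)$ is at least as large as the Kyle graph (so that, as in Theorem~\ref{Thm-OG4}, the entries are the active positive values) is what guarantees each entry is counted once and the map ``entry $\mapsto$ consecutive arc'' is the desired parity-respecting bijection.
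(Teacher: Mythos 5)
Your proof is correct and follows essentially the same route as the paper's: both arguments count, for each entry of $s_l$, the single arc it contributes between the consecutive pair $v_i$, $v_{i+1}$, with the parity of the entry's index deciding whether that arc is $(v_i,v_{i+1})$ or $(v_{i+1},v_i)$, so that $d_r^+(v_i)$ and $d_r^-(v_i)$ count the odd and even indexed entries respectively and $l=m+t$. Your explicit caveat that every $a_j\ge 1$ is needed (a zero entry fires neither slot) is a hypothesis the paper's proof silently assumes, so flagging it is a small refinement rather than a different approach.
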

\begin{proof}
Construct the ornated graph $O_n((a_j); 1 \le j \le l), l= m+t$ with $m$ the number of even indexed entries and $t$ the number of odd indexed entries, by beginning with the arcless graph on $n$ vertices and linking first, the arcs $\{(v_i, v_j):(i+a_1) \ge j, i<j\}$. Clearly we have that $d_r^+(v_i) = 1$, for $i= 1, 2, 3, \ldots, n-1$. By recursively adding the linkage of arcs $\{(v_i, v_j):(i+a_s) \ge j, i<j\}$ for all, say $t$ odd indexed entries, we have that $d_r^+(v_i) = t$ for $i= 1, 2, 3, \ldots, n-1$. 

By applying Definition \ref{Def-OG} for all, say $m$ even index entries,the result $d_r^-(v_i) = t$ for $i= 1, 2, 3, \ldots, n-1$ follows in a similar manner.
\end{proof}
\begin{theorem}\label{T-OGRT1}[\textit{Ratanang's Theorem}]  For a Kyle Graph with $m \times m$ adjacency matrix
\begin{equation*} 
\begin{pmatrix}
0 & e_{12} & \dots & e_{1m}\\
e_{21} & 0 & \dots & e_{2m}\\
\hdotsfor{4}\\
e_{m1} & e_{m2} & \ \dots & 0
\end{pmatrix},
\end{equation*}
with each entry $e_{ij}$ equal to the number of arcs $(v_i, v_j)$, the ordered string defining the ornated graph $O_n(s_l), n\ge m$,  is given by $s_l = (a_j); 1 \le j \le l$ such that $a_1 < a_2 < a_3 < \ldots < a_l, l= (e_{i(i+1)} + e_{(i+1)i}), i\le (m-1)$ and $a_1, a_2, a_3, \ldots, a_l = \lfloor \frac {m}{2} \rfloor$ can all be determined.
\end{theorem}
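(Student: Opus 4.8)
The plan is to read Ratanang's Theorem as a reconstruction (inverse) statement: given the $m \times m$ multiplicity matrix $(e_{ij})$ of a Kyle graph, we recover the defining ordered string. First I would pin down the order. By Lemma \ref{Lem-KG2} a Kyle graph on this string lives on exactly $2k+1$ vertices with $k = a_q = \max\{a_j\}$, so an $m \times m$ matrix forces $m = 2k+1$, whence $k = \lfloor \frac{m}{2}\rfloor$ and the largest entry satisfies $a_l = \lfloor \frac{m}{2}\rfloor$. Next I would pin down the length: reading any consecutive pair $(v_i, v_{i+1})$, Proposition \ref{P-OGIOD} identifies $e_{i(i+1)}$ with the number $t$ of odd-indexed (forward) entries and $e_{(i+1)i}$ with the number of even-indexed (backward) entries, so that $l = e_{i(i+1)} + e_{(i+1)i}$ is determined, in agreement with the statement.

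The heart of the argument is recovering the individual values, for which I would exploit the two boundary vertices. The vertex $v_1$ emits no backward arcs, so its row records only the contribution of the odd-indexed entries: by Definition \ref{Def-OG} an odd-indexed entry of value $a$ contributes precisely the arcs $(v_1, v_2), (v_1, v_3), \ldots, (v_1, v_{1+a})$. Hence $e_{1j}$ equals the number of odd-indexed entries with $a \ge j-1$, a non-increasing ``survival count'', and the successive differences $e_{1j} - e_{1(j+1)}$ return the number of odd-indexed entries equal to $j-1$. This recovers the entire multiset of forward values. Symmetrically, $v_n$ emits no forward arcs, so $e_{n(n-p)}$ counts the even-indexed entries of value $\ge p$, and its differences recover the multiset of backward values. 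Merging the two multisets, arranging them in increasing order, and assigning the forward values to odd indices and the backward values to even indices yields the string $a_1 < a_2 < \cdots < a_l$; by the reordering and redundancy freedoms recorded in Lemma \ref{Lem-OG1} and Remarks \ref{Rem-1} and \ref{Rem-2}, any such assignment reproduces the same ornated graph up to the stated equivalences, so all the $a_j$ are determined.

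The step I expect to be the main obstacle, and the one where the Kyle hypothesis is indispensable, is showing that the two boundary rows are read off without truncation, so that the survival counts are exact rather than clipped. Because the Kyle graph satisfies $m = 2k+1$ with every entry $a \le k$, the forward reach of $v_1$ never exceeds index $1 + k = k+1 \le 2k+1 = m$, and the backward reach of $v_n$ never drops below index $n - k = k+1 \ge 1$; thus no arc incident to a boundary vertex is cut off by the ends of the vertex labelling, and every entry is faithfully recorded in row $1$ or row $n$. A secondary point to verify is the direction bookkeeping: that the forward/backward split read from rows $v_1$ and $v_n$ is consistent with the counts $e_{i(i+1)}$ and $e_{(i+1)i}$ of Proposition \ref{P-OGIOD}, which amounts to checking that the number of nonzero survival counts in each boundary row matches $t$ and $l-t$ respectively. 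Once this is in hand, the reconstruction and the bound $a_l = \lfloor \frac{m}{2}\rfloor$ follow.
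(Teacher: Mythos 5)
Your proposal is correct and follows essentially the same route as the paper: fix $l$ via Proposition \ref{P-OGIOD}, then decode the string values from the arc multiplicities incident to a boundary vertex, which form a ``survival count'' of the entries. The only cosmetic differences are that the paper reads the even-indexed (backward) values from column $1$ rather than row $n$, and peels off the maxima by repeatedly decrementing the nonzero entries (the ``deviated matrix'') instead of taking successive differences $e_{1j}-e_{1(j+1)}$ --- two equivalent ways of inverting the same conjugate-partition data.
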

\begin{proof}
The result $l= (e_{i(i+1)} + e_{(i+1)i}),i \le (m-1)$ follows directly from Proposition \ref{P-OGIOD}.

Consider the adjacency matrix of the Kyle Graph,
\begin{equation*} 
\begin{pmatrix}
0 & e_{12} & \dots & e_{1m}\\
e_{21} & 0 & \dots & e_{2m}\\
\hdotsfor{4}\\
e_{m1} & e_{m2} & \ \dots & 0
\end{pmatrix}.
\end{equation*}
Without loss of generality we assume that the defining ordered string $s_l = (a_j); 1 \le j \le l$ has odd number of entries. Consider row $(e_{1j}), 1\le j \le m$ (row 1) and count the number of entries $e_{1j} \ge 1, 1<j \le m$, say $q$. From Definition \ref{Def-OG} it follows that $a_l = q = \lfloor \frac {m}{2} \rfloor$. 

Now, consider the deviated matrix where we subtract $1$ from each first row entry, $e_{1j} \ge 1, (1<j \le m)$.  Consider first row of the deviated matrix and count the number of entries $e_{1j}^* \ge 1, 1<j \le m$, say $t$. From Definition \ref{Def-OG} it follows that $a_{(l-2)} = t$. Recursively all entries $a_1, a_3, a_5, \ldots, a_l$ can be determined.

Now, consider the column $(e_{j1}), 1\le j \le m$, (column 1) and count the number of entries $e_{j1} \ge 1, 1<j \le m$, say $t$. From Definition \ref{Def-OG} it follows that $a_{l-1} = t$. Now, consider the deviated matrix where we subtract $1$ from each first column entry, $e_{j1} \ge 1, 1< j \le m$. Consider the first column of the deviated matrix and count the number of entries $e_{j1}^* \ge 1, 1< j \le m$, say $w$.  From Definition \ref{Def-OG} it follows that $a_{(l-3)} = w$. Recursively all entries $a_2, a_4, a_6, \ldots, a_{l-1}$ can be determined. 

\ni This completes the proof.
\end{proof}

A relation between the degrees of vertices in an ornated graph and its adjacency matrix is established in the following result.

\begin{corollary}
For any vertex $v_i$ in an ornated graph, $d(v_i) = \sum \limits_{j=1}^{n}e_{ij} + \sum \limits_{j=1}^{n}e_{ji}$, where $e_{ij}, e_{ji}$ are the corresponding entries in the $n \times n$ adjacency matrix.
\end{corollary}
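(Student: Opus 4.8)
The plan is to unpack the definition of degree in a digraph together with the combinatorial meaning of the adjacency-matrix entries, so that the claimed identity reduces to separately identifying the outdegree with a row sum and the indegree with a column sum. Recall from the introductory conventions that for a vertex $v_i$ in a directed graph $D$ one has $d(v_i) = d_D(v_i) = d^+_D(v_i) + d^-_D(v_i)$, the sum of the outdegree and the indegree. Since an ornated graph is a non-simple digraph, each entry $e_{ij}$ is prescribed to record the full multiplicity of the arc $(v_i, v_j)$, so summing over an entire row or column will count arcs with the correct multiplicity.

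First I would establish that $\sum_{j=1}^{n} e_{ij} = d^+(v_i)$. By the stated convention $e_{ij}$ equals the number of arcs $(v_i, v_j)$; fixing the tail at $v_i$ and letting the head $v_j$ range over all $n$ vertices therefore enumerates exactly the arcs emanating from $v_i$, which is precisely the outdegree $d^+(v_i)$. The diagonal term $e_{ii}$ contributes nothing here, since the displayed adjacency matrix has zero diagonal (no self-loops), so its inclusion in the sum is harmless.

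Next I would establish the dual statement $\sum_{j=1}^{n} e_{ji} = d^-(v_i)$ by the symmetric argument: $e_{ji}$ counts the arcs $(v_j, v_i)$, and letting the tail $v_j$ range over all vertices while holding the head fixed at $v_i$ enumerates precisely the arcs entering $v_i$, that is, the indegree $d^-(v_i)$. Adding the two identities then yields $\sum_{j=1}^{n} e_{ij} + \sum_{j=1}^{n} e_{ji} = d^+(v_i) + d^-(v_i) = d(v_i)$, which is exactly the claimed formula.

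Because the whole argument is a direct translation between the arc-counting definition of the matrix entries and the definition of vertex degree, I do not expect any genuine obstacle. The only point demanding a moment's care is the multigraph setting: one must confirm that $e_{ij}$ is read as an arc multiplicity rather than a $0/1$ incidence indicator, so that parallel arcs are tallied correctly in both the row and the column sums; once this is fixed, the corollary is immediate and requires no appeal to the specific clockwise/anticlockwise construction of $O_n(s_l)$.
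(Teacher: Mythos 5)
Your proposal is correct and follows essentially the same route as the paper's own proof: identify the row sum $\sum_{j=1}^{n}e_{ij}$ with $d^+(v_i)$, the column sum $\sum_{j=1}^{n}e_{ji}$ with $d^-(v_i)$, and add them using the convention $d(v_i)=d^+(v_i)+d^-(v_i)$. Your extra care about reading $e_{ij}$ as an arc multiplicity in the non-simple setting is a sensible amplification of what the paper leaves implicit, but it does not change the argument.
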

\begin{proof}
Because $d^+(v_i) = \sum \limits_{j=1}^{n}e_{ij}$ and $d^-(v_i) = \sum \limits_{j=1}^{n}e_{ji}$ the result $d(v_i) = d^+(v_i) + d^-(v_i) = \sum \limits_{j=1}^{n}e_{ij} + \sum \limits_{j=1}^{n}e_{ji}$, follows easily.
\end{proof}

\section{Generalised Ornated and Kyle Graphs}

We now introduce the concept of generalised ornated and Kyle graphs by applying a finite number of finite strings consecutively to $n \in \N$ vertices.

\begin{definition}{\rm 
When applying the finite number of ordered strings $s_{l_1}, s_{l_2}, \ldots s_{l_t}$ consecutively to the same $n$ vertices, we write $O_n(s_{l_1}, s_{l_2}, \ldots, s_{l_t}) = O_n(s_{l_1}) + O_n(s_{l_2}) + \ldots + O_n(s_{l_t}) = \sum\limits_{i=1}^{t}O_n(s_{l_i})$ and the graph is called the \textit{generalised ornated graph}.}
\end{definition}

\ni Let $k_i = max\{(a_j); a_j \in s_{l_i}\}$. In view of this definition, we establish the following theorem. 

\begin{theorem}\label{Thm-4.1}
For the finite number of ordered strings $s_{l_1}, s_{l_2}, \ldots, s_{l_t}$, the smallest ornated graph having the maximum degree $\Delta(O_n(s_{l_1}, s_{l_2},\ldots, s_{l_t})) = \sum\limits_{j=1}^{t}\Delta(O_n(s_{l_j}))$ has $n=2(\max\{ k_i\}_{\forall i}) + 1$ vertices. 
\end{theorem}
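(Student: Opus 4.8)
The plan is to exploit the fact that the generalised ornated graph is a superposition (arc-wise accumulation) of its component ornated graphs, so that vertex degrees simply add. First I would observe that, by the definition of the sum $O_n(s_{l_1}) + \cdots + O_n(s_{l_t})$, the arcs contributed by distinct strings are accumulated on the common vertex set; since ornated graphs are non-simple, this yields $d_{O_n(s_{l_1},\ldots,s_{l_t})}(v_i) = \sum_{j=1}^{t} d_{O_n(s_{l_j})}(v_i)$ for every vertex $v_i$. Taking the maximum over $i$ immediately gives the upper bound $\Delta(O_n(s_{l_1},\ldots,s_{l_t})) \le \sum_{j=1}^{t}\Delta(O_n(s_{l_j}))$, with equality precisely when a single vertex attains the maximal degree in every component $O_n(s_{l_j})$ at once.

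Next I would locate, for each component, the set of vertices carrying its maximal possible degree $\Delta(O_n(s_{l_j})) = 2\sum_{a\in s_{l_j}}a$. By Lemma \ref{Lem-KG2} this degree is first realised at the central vertex $v_{k_j+1}$ when $n = 2k_j+1$, and by Proposition \ref{Prop-OG3a} the central cluster on $n \ge 2k_j+1$ vertices is exactly the contiguous block $\{v_{k_j+1}, v_{k_j+2}, \ldots, v_{n-k_j}\}$. Writing $K = \max_i\{k_i\}$, the vertices attaining the full sum are therefore those lying in every cluster, namely $\bigcap_{j=1}^{t}\{v_{k_j+1},\ldots,v_{n-k_j}\} = \{v_i : K+1 \le i \le n-K\}$, which is nonempty if and only if $n \ge 2K+1$.

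It then remains to confirm minimality. At $n = 2K+1$ this common block collapses to the single central vertex $v_{K+1}$; since $k_j \le K$ for all $j$, that vertex lies in every component's central cluster and hence attains $2\sum_{a\in s_{l_j}}a = \Delta(O_n(s_{l_j}))$ in each component, so additivity of degrees gives $d(v_{K+1}) = \sum_{j=1}^{t}\Delta(O_n(s_{l_j}))$. Conversely, for $n < 2K+1$ the component $s_{l_j}$ with $k_j = K$ cannot realise its maximal degree at any vertex, because Lemma \ref{Lem-KG2} shows the smallest ornated graph reaching $2\sum_{a\in s_{l_j}}a$ requires $2K+1$ vertices; thus for every $v_i$ the term $d_{O_n(s_{l_j})}(v_i)$ is strictly below $\Delta(O_n(s_{l_j}))$ while the remaining terms are bounded by their own maxima, and summing yields $\Delta(O_n(s_{l_1},\ldots,s_{l_t})) < \sum_{j=1}^{t}\Delta(O_n(s_{l_j}))$. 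Hence $n = 2K+1 = 2(\max_i\{k_i\})+1$ is the smallest admissible order.

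The step I expect to be the main obstacle is the precise description of each central cluster as the contiguous block $\{v_{k_j+1},\ldots,v_{n-k_j}\}$ together with the verification that degrees fall off strictly outside it; this is where the strict monotonicity recorded in Lemma \ref{Lem-OG3} and the counting in Proposition \ref{Prop-OG3a} must be invoked carefully, alongside a clean reading of $\Delta(O_n(s_{l_j}))$ as the maximal attainable degree $2\sum_{a\in s_{l_j}}a$ rather than as an $n$-dependent quantity.
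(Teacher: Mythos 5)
Your proof is correct and follows essentially the same route as the paper's: degrees add under the superposition of the strings, and the central vertex $v_{K+1}$ (with $K=\max_i\{k_i\}$) lies in the central cluster of every component, so at $n=2K+1$ it accumulates all the component maxima. You actually go a bit further than the paper by explicitly arguing minimality --- that no $n<2K+1$ can work because the component with $k_j=K$ cannot realise its maximal degree there --- a direction the paper's proof leaves to an unexplained ``recursively the result follows.''
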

\begin{proof}
For the ordered string $s_{l_m}$ having $k_m = \max\{ k_i\}_{\forall i}$, we have the Kyle vertex $v_{k_m+1}$ with $\Delta(O_{2k_m+1}(s_{l_m}))=d_{O_{2k_m+1}}(v_{k_m+1})$. Without loss of generality, if we apply ordered string $s_{l_h}$ with $k_h \leq k_m$, then vertex $v_{k_m+1} \in \K(O_{2k_m+1}(s_{l_h}))$. Hence, $\Delta(O_{2k_m+1}(s_{l_m}, s_{l_h})) = d_{O_{2k_m+1}}(v_{k_m+1}) + 2k_h+1 = d_{O_{2k_m+1}}(v_{k_m+1}) + d_{O_{2k_h+1}}(v_{k_h+1}) = \Delta(O_{2k_m+1}(v_{k_m+1})) + \Delta(O_{2k_h+1}(v_{k_h+1}))= \Delta(O_{2k_m+1}(v_{k_m+1})) + \Delta(O_{2k_m+1}(v_{k_h+1}))$. Recursively the result follows.
\end{proof}

\begin{corollary}
If $n=1+2 \max\{ k_i\}$, then for any vertex $v_i \in V(O_n(s_{l_1}, s_{l_2}, \ldots, s_{l_t}))$ we have $d(v_i)=\sum\limits_{j=1}^{t}d_{O_n(s_{l_j})}(v_i)$.
\end{corollary}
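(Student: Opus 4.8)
The plan is to obtain the result as an immediate consequence of the definition of the generalised ornated graph as a sum, together with the degree--adjacency relation established in the corollary to Ratanang's Theorem (Theorem \ref{T-OGRT1}). By definition we have $O_n(s_{l_1}, s_{l_2}, \ldots, s_{l_t}) = \sum_{j=1}^{t} O_n(s_{l_j})$, and the sum of ornated graphs on a common vertex set combines their arcs. Working in the non-simple digraph setting (consistent with the arc-counting entries $e_{pq}$), the correct reading of this sum is that arc multiplicities add entry-wise: writing $e_{pq}^{(j)}$ for the number of arcs $(v_p, v_q)$ in the component $O_n(s_{l_j})$, the corresponding entry of the adjacency matrix of the generalised graph is $e_{pq} = \sum_{j=1}^{t} e_{pq}^{(j)}$.

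First I would fix an arbitrary vertex $v_i$ and apply the degree--adjacency corollary to the generalised graph to write $d(v_i) = \sum_{p=1}^{n} e_{ip} + \sum_{p=1}^{n} e_{pi}$. Substituting the entry-wise additivity $e_{ip} = \sum_{j=1}^{t} e_{ip}^{(j)}$ and $e_{pi} = \sum_{j=1}^{t} e_{pi}^{(j)}$ and interchanging the two finite summations, I obtain $d(v_i) = \sum_{j=1}^{t} \left( \sum_{p=1}^{n} e_{ip}^{(j)} + \sum_{p=1}^{n} e_{pi}^{(j)} \right)$. The inner parenthesised expression is exactly $d_{O_n(s_{l_j})}(v_i)$, again by the degree--adjacency corollary, now applied to the single component $O_n(s_{l_j})$; this yields $d(v_i) = \sum_{j=1}^{t} d_{O_n(s_{l_j})}(v_i)$, which is the claim.

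The hypothesis $n = 1 + 2\max\{k_i\}$ plays only a supporting role: since $n \ge 1 + 2k_j$ for every $j$, each component $O_n(s_{l_j})$ has order at least that of its own Kyle graph, so every summand $d_{O_n(s_{l_j})}(v_i)$ is well-defined and the regime of Theorem \ref{Thm-4.1} applies. The one point requiring care --- and the only place where a naive argument could fail --- is the interpretation of the sum of ornated graphs as an additive (multiplicity) sum of arcs rather than a set-theoretic union: two distinct strings may contribute the same arc $(v_p, v_q)$, and only by counting such coincidences with multiplicity does the degree genuinely split as a sum over components. Once this convention is fixed, the computation above is entirely routine and no further obstacle remains.
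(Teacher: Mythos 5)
Your proof is correct, but it takes a genuinely different route from the paper. The paper disposes of this corollary in one line, saying it ``follows immediately from the proof of Theorem \ref{Thm-4.1}''; that proof, however, only tracks the degree of the Kyle vertex $v_{k_m+1}$ when strings are applied consecutively, so the extension to an arbitrary vertex $v_i$ is left implicit. You instead argue directly from arc counting: the generalised ornated graph is the multiplicity-wise sum $\sum_{j=1}^{t} O_n(s_{l_j})$, so the adjacency matrix entries add entry-wise, and the degree identity $d(v_i) = \sum_{p} e_{ip} + \sum_{p} e_{pi}$ from the corollary to Ratanang's Theorem distributes over the components after interchanging finite sums. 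This is more self-contained and more general --- as you correctly observe, the identity holds for every $n$, and the hypothesis $n = 1 + 2\max\{k_i\}$ is not actually needed for the degree decomposition itself (it matters only for the $\Delta$-statement of Theorem \ref{Thm-4.1}). Your explicit flagging of the multiset (rather than set-union) reading of the sum of ornated graphs is a genuine point of care that the paper glosses over, since distinct strings can contribute coincident arcs and the degree splits additively only when these are counted with multiplicity. The trade-off is that your argument leans on the degree--adjacency corollary, whereas the paper's intent is a purely recursive construction argument; both are valid, and yours is the more rigorous of the two.
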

\begin{proof}
The result follows immediately from the proof of Theorem \ref{Thm-4.1}.
\end{proof}

The Kyle graph corresponding to $O_n(s_{l_1}, s_{l_2}, \ldots, s_{l_t}), n = 2(max\{ k_i\}_{\forall i}) + 1$ is called the \emph{generalised Kyle graph}

\section{Conclusion and Scope for Further Studies}

In this paper, we have introduced a new family of directed graphs called ornated graphs and initiated a study on the structural properties and characteristics of ornated graphs. Some problems in this area are still to be settled and they seem to be much promising and challenging for further intensive investigation. Some of those problems we have identified for future studies in this are are the following. 

\ni Let us now introduce the notion of a symmetric directed graph as follows.

\begin{definition}{\rm
If the vertices of a directed graph $D$ on $n$ vertices can be numbered such that $d^+(v_1) \le d^+(v_2) \le \ldots \le d^+(v_i) \le d^+(v_{i+1}) = \ldots =d^+(v_{i+j})_{j-1 \ge 0}, \ge d^+(v_{n-(i-1)}) \ge d^+(v_{n-(i-2)}) \ge \ldots \ge d^+(v_n)$ and $d^-(v_1) \le d^-(v_2) \le \ldots \le d^-(v_i) \le d^-(v_{i+1}) = \ldots =d^-(v_{i+j})_{j-1 \ge 0} \ge d^-(v_{n-(i-1)}) \ge d^-(v_{n-(i-2)}) \ge \ldots \ge d^-(v_n)$, then $D$ is called a \textit{symmetric directed graph}.}
\end{definition}

\ni We strongly believe that the following conjecture on symmetric directed graphs is true.

\begin{conjecture}\label{Con-OGAM}
For a symmetric directed graph consider the corresponding adjacency matrix,
\begin{equation*} 
\begin{pmatrix}
e_{11} & e_{12} & \dots & e_{1m}\\
e_{21} & e_{21} & \dots & e_{2m}\\
\hdotsfor{4}\\
e_{m1} & e_{m2} & \ \dots & e_{mm}
\end{pmatrix}.
\end{equation*}
A symmetric directed graph is a Kyle Graph if and only if
\begin{enumerate}\itemsep0mm
\item[(i)] $m$ is odd and $m = 2a_l + 1, a_l$ as determined by Ratanang's Theorem,
\item[(ii)] $e_{ii} =0,\forall i$,
\item[(iii)] $e_{i(i+1)}+ e_{(i+1)i} = e_{j(j+1)} + e_{(j+1)j}, 1\le i,j \le (m-1)$,
\item[(iv)] For row $(e_{1j})$ we have $e_{12} \ge e_{13}\ge \ldots \ge e_{1\lfloor\frac {m}{2}\rfloor} =1$ and $e_{1j}=0, \lfloor\frac {m}{2}\rfloor +1 \le j \le m$,
\item[(v)]  For column $(e_{j1})$ we have $e_{21} = e_{31}= \ldots= e_{(\lfloor\frac {m}{2}\rfloor -1)1} = 1$ and $e_{j1} =0, \lfloor\frac {m}{2}\rfloor \le j \le m$.
\end{enumerate} 
\end{conjecture}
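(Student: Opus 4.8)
The plan is to exploit the fact that, for an ornated graph, every entry of the adjacency matrix depends only on the signed gap between its two indices. Writing $f(d)$ for the number of odd-indexed entries of $s_l$ that are at least $d$, and $g(d)$ for the number of even-indexed entries that are at least $d$, Definition \ref{Def-OG} gives $e_{ij}=f(j-i)$ for $i<j$, $e_{ij}=g(i-j)$ for $i>j$, and $e_{ii}=0$. Thus the matrix of a Kyle Graph is constant along each diagonal (a Toeplitz matrix), the functions $f$ and $g$ are non-increasing, and the whole matrix is recovered from its first row (which lists $f(1),f(2),\dots$) together with its first column (which lists $g(1),g(2),\dots$). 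This single observation is what links the combinatorial description of the string to the five matrix conditions, and I would record it as a preliminary lemma before splitting into the two implications.

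For the forward implication I would assume the graph is the Kyle Graph $O_{2k+1}(s_l)$ with $k=a_q=\max\{a_j\}$ and check the conditions one at a time against the formulas above. Condition (ii) is immediate since arcs require distinct endpoints. Condition (i) is exactly Lemma \ref{Lem-KG2}: the order is $m=2k+1$ and, reading $a_l$ off via Ratanang's Theorem (Theorem \ref{T-OGRT1}), $a_l=k=\lfloor m/2\rfloor$, so that $m=2a_l+1$. Condition (iii) follows from Proposition \ref{P-OGIOD}: $e_{i(i+1)}+e_{(i+1)i}=f(1)+g(1)=t+m'$ is the total number $l$ of entries, independent of $i$ (here $t$ and $m'$ denote the odd- and even-indexed counts). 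Conditions (iv) and (v) are then just the statements that the first row equals the non-increasing sequence $f(1)\ge f(2)\ge\cdots$ and the first column equals $g(1)\ge g(2)\ge\cdots$, each terminating in the value $1$ at the largest odd (respectively even) reach and vanishing beyond it; I would pin down the last non-zero position using that the largest odd-indexed entry equals $k$ while the largest even-indexed entry is at most $k-1$, so that the supports sit exactly where (iv) and (v) place them.

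For the converse I would start from a symmetric directed graph whose matrix satisfies (i)--(v) and reconstruct a candidate string. Applying the deviation procedure of Theorem \ref{T-OGRT1} to the first row recovers the odd-indexed values $a_l,a_{l-2},\dots$ as the successive counts of positive entries, and applying it to the first column recovers the even-indexed values $a_{l-1},a_{l-3},\dots$; condition (iv) guarantees $a_l=\lfloor m/2\rfloor$, and condition (i) forces the order to be $m=2a_l+1$. Building $O_m(s_l)$ from this string yields, by Lemma \ref{Lem-KG2}, the Kyle Graph of its family, and by construction its first row and first column agree with those of the given matrix. It remains to propagate this agreement to every entry.

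The main obstacle is precisely that last propagation step: conditions (i)--(v) constrain only the diagonal, the two adjacent sub/super-diagonal sums, and the first row and column, whereas a Kyle Graph matrix must be constant along \emph{all} of its diagonals. I would close this gap using the defining degree-symmetry of a symmetric directed graph together with condition (iii): the relative in- and out-degrees are constant along the boundary, and I would argue inductively, peeling off diagonals from the outside in, that the unimodal out-degree and in-degree sequences force $e_{ij}=f(j-i)$ and $e_{ij}=g(i-j)$ on every diagonal, not merely on the first row and column. Establishing this rigidity---showing that the only symmetric directed graph consistent with the prescribed boundary data is the Toeplitz matrix of the reconstructed Kyle Graph---is the crux of the argument, and I expect it to require the full strength of the symmetric-digraph hypothesis rather than conditions (iv) and (v) alone.
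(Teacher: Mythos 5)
There is nothing in the paper to compare your proposal against: the statement is explicitly a \emph{conjecture}, and the authors prove none of it beyond remarking that (i) is a consequence of Lemma \ref{Lem-KG2} and that (ii) holds because ornated graphs have no loops; ``conclude the proof of Conjecture \ref{Con-OGAM}'' is then listed as an open problem. So your text has to stand on its own as a proof, and it does not. You yourself flag the decisive step --- showing that conditions (i)--(v), which only constrain the diagonal, the two sums $e_{i(i+1)}+e_{(i+1)i}$, and the first row and column, force the entire matrix to be the Toeplitz matrix $e_{ij}=f(j-i)$, $e_{ij}=g(i-j)$ --- as ``the crux,'' and you never carry it out. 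A plan that defers its central rigidity argument to an unproved ``peeling off diagonals'' induction is not a proof, and it is far from clear that the symmetric-digraph hypothesis (which only orders the in- and out-degree sequences) supplies enough information to pin down individual entries away from the first row and column.

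There is also a concrete failure in the forward direction. Your Toeplitz observation $e_{1j}=f(j-1)$, $e_{j1}=g(j-1)$ is correct, but it immediately refutes (iv) and (v) as literally stated. Take $s_3=(1,3,5)$, so $k=5$ and $m=11$: the odd-indexed entries are $1$ and $5$, hence $e_{1,6}=f(5)=1$, whereas (iv) demands $e_{1j}=0$ for $j\ge \lfloor m/2\rfloor+1=6$. Your attempted fix --- ``the largest even-indexed entry is at most $k-1$'' so the supports sit where (iv) and (v) place them --- has no justification; the maximum of the string may occur at an even index, and nothing in Lemma \ref{Lem-KG2} or Theorem \ref{T-OGRT1} prevents this. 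Likewise, for the paper's own illustration $s_6=(1,3,5,1,2,8)$ the even-indexed entries are $3,1,8$, so $e_{21}=g(1)=3$, contradicting the requirement $e_{21}=1$ in (v). So the equivalence cannot be verified in the form stated: either the conjecture needs repairing (most plausibly the conditions (iv) and (v) should describe non-increasing first-row and first-column profiles determined by $f$ and $g$, with the cutoffs adjusted), or additional hypotheses on the string are needed. Your write-up should either exhibit and prove a corrected statement or produce the counterexample; as it stands it proves neither direction.
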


It can be seen that the first part of the conjecture is an immediate consequence of Lemma \ref{Lem-KG2} on Kyle graphs. Since no loops are included in the definition of ornated graphs, we can also note that $e_{ii} =0,\forall i$. The remaining parts of the above result remain to be settled.

\ni Some other open problems we have identified in this area are the following.

\begin{problem}{\rm  
Conclude the proof of Conjecture \ref{Con-OGAM}.}
\end{problem}

\begin{problem}{\rm  
Find the values of $d^+(v_i)$ and $d^-(v_i)$ for the ornated graph, $O_n((a_j); 1 \le j \le l))$ for $j \in \{1, 2, 3, \ldots, n\}$ in general.}
\end{problem} 

\begin{problem}{\rm  
Generalise Ratanang's Theorem allowing ordered strings such that $a_1 \le a_2 \le a_3 \le \ldots \le a_l$. }
\end{problem}

\begin{problem}{\rm 
Generalise Ratanang's Theorem with regard to the generalised ornated graph, $O_n(s_{l_1}, s_{l_2}, \ldots, s_{l_t})$, where $n=1+2\max\{ k_i\}$, where $1\le i\le l$ .}
\end{problem}

Now repetition and more entry values are possible. Hence, for example, $(1, 3, 5, 6)$ can become $(1, 1, 2 ,3, 4, 4, 4, 4, \ldots, 4, 5, 5, 6, 6, 6, \ldots, a_l=6), 1\le a_i\le 6, 1\le i\le l$.

\begin{problem}{\rm  
Given the degree sequence $d(v_1), d(v_2), d(v_3), \ldots, d(v_m)$ of a Kyle Graph, is it possible to determine an ordered string $s_l$ so that $O_n(s_l)$ obtains the exact degree sequence? At least it is obvious that $\nu(O_n(s_l)) = m$ and that, $a_l = \lfloor \frac {m}{2} \rfloor$.} 
\end{problem}

All these facts highlight that there is a wide scope in this area for further research.

\end{document}